\theoremstyle{definition}
\newtheorem{Def}{Definition} 
\newtheorem{example}[Def]{Example}
\newtheorem{rem}[Def]{Remark}
\theoremstyle{plain}
\newtheorem{thm}[Def]{Theorem}
\newtheorem{cor}[Def]{Corollary}
\newtheorem{conj}[Def]{Conjecture}
\newtheorem{theorem}[Def]{Theorem}
\newtheorem{lemma}[Def]{Lemma}
\newtheorem{proposition}[Def]{Proposition}
\newcommand{\g}{\mathfrak{g}}
\newcommand{\JJ}{\mathcal{A}}
\newcommand{\la}{\lambda}
\newcommand{\hwv}{v_\la^+}
\title[Odd singular vector Formula]{Odd singular vector formula for general linear Lie superalgebras}
\author[Jie Liu, Li Luo and Weiqiang Wang]{Jie Liu, Li Luo and Weiqiang Wang}
\address{Department of mathematics, Shanghai Key Laboratory of Pure Mathematics and Mathematical Practice, East China Normal University, Shanghai 200241, China}
    \email{jie$_{-}$liu@math.uni-kiel.de, lluo@math.ecnu.edu.cn, ww9c@virginia.edu}
\address{ Department of Mathematics\\ University of Virginia\\ Charlottesville, VA 22904}
\keywords{}
\subjclass{}
\begin{document}

\begin{abstract}
We establish a closed formula for a singular vector of weight $\lambda-\beta$ in the Verma module of highest weight $\lambda$ for Lie superalgebra $\mathfrak{gl}(m|n)$ when $\lambda$ is atypical with respect to an odd positive root $\beta$. It is further shown that this vector is unique up to a scalar multiple, and it descends to a singular vector, again unique up to a scalar multiple, in the corresponding Kac module when both $\lambda$ and $\lambda-\beta$ are dominant integral.
\end{abstract}

\maketitle


\section{Introduction}


For a general basic Lie superalgebra $\g$ (which is a suitable super generalization of semisimple Lie algebras) with a non-degenerate invariant bilinear form $(\cdot, \cdot)$, the linkage principle is not completely controlled by the Weyl group; cf. the book \cite{CW12}. Besides the more familiar linkage for weights in a same Weyl group orbit, a so-called $\beta$-atypical weight $\la$ can be linked to $\la -\beta$ for a positive odd isotropic root $\beta$; here by $\beta$-atypical we mean $(\la+\rho, \beta)=0$. This is one of the fundamental differences between representation theories of Lie superalgebras and Lie algebras.

Now let $\g$ be the general linear Lie superalgebra $\mathfrak{gl}(m|n)$ over the complex number field $\mathbb{C}$. The main result of this note is a simple closed formula for an (odd) singular vector $S_{-\beta} \hwv$ of weight $\la-\beta$ in the Verma module $M(\la)$ of highest weight $\la$, when $\la$ is $\beta$-atypical. We then show that such a singular vector is unique up to a scalar multiple; see Theorem~\ref{thm:Vsingular}. In other words, we have
\begin{equation}  \label{eq:1d}
\mathrm{Hom}_\mathfrak{g} \big(M(\lambda-\beta),M(\lambda) \big)=\mathbb{C}.
\end{equation}
This can be regarded as a super analogue of a classical theorem of Verma for semisimple Lie algebras. Serganova \cite{Se96} showed earlier the Hom space in \eqref{eq:1d} is non-vanishing by some indirect argument. We note that formulae for various singular vectors associated to an {\em even} reflection in a Verma module of a basic Lie superalgebra were recently established in \cite{Sa17}; cf. also \cite{CW17}.

We readily convert our odd singular vector formula to a simple closed formula for the corresponding odd Shapovalov element; see Corollary~\ref{cor:Selement}. Musson has studied systematically Shapovalov elements in the setting of basic Lie superalgebras, and in particular, he obtained in \cite[Theorems~9.1, 9.2]{Mu17} very different and more involved expressions for odd Shapovalov elements in terms of non-commutative determinants.

For a dominant integral weight $\la$ for $\mathfrak{gl}(m|n)$, the Kac module $K(\la)$ is by definition a maximal finite-dimensional quotient of the Verma module $M(\la)$. Assume $\la$ is $\beta$-atypical and in addition $\la-\beta$ is dominant integral. We show that the aformentioned vector $S_{-\beta} \hwv$ descends to a singular vector in $K(\la)$, and such a singular vector in $K(\la)$ is unique up to a scalar multiple; see Theorem~\ref{thm:Ksingular}.
Indeed this uniqueness is equivalent to an earlier simple observation by Serganova that
$\dim \mathrm{Hom}_\mathfrak{g} \big(K(\lambda-\beta),K(\lambda) \big) \leq 1.$
Hence we have provided a new constructive proof that $\mathrm{Hom}_\mathfrak{g} \big(K(\lambda-\beta),K(\lambda) \big) =\mathbb C$, an old result of Serganova \cite[Theorem 5.5]{Se96}. This result played a fundamental role in the earlier approaches \cite{Se96, Br03} toward Kazhdan-Lusztig theory for the category of {\em finite-dimensional} $\mathfrak{gl}(m|n)$-modules.

The identity \eqref{eq:1d} was also known in \cite{CW17} for the exceptional Lie superalgebra $D(2,1;\zeta)$ where explicit odd singular vector formulas were established. We conjecture that the identity \eqref{eq:1d} holds for basic Lie superalgebas in general. We show that essentially the same odd singular vector formula for $\mathfrak{gl}(m|n)$ is valid for half of the odd isotropic roots of $\mathfrak{osp}$ Lie superalgebras.
It seems highly nontrivial and very interesting to generalize the odd singular vector formula of this paper to the other half of odd roots for $\mathfrak{osp}$ and the other basic Lie superalgebras.

\subsection*{Acknowledgement}
The authors thank Bin Shu for raising the question about odd singular vectors. LL is partially supported by the Science and Technology Commission of Shanghai Municipality (grant No. 18dz2271000) and the NSF of China (grant No. 11671108, 11871214). WW is partially supported by the NSF grant DMS-1702254, and he thanks ECNU for the hospitality and support during his visit.

\section{Formula for odd singular vectors}

\subsection{The preliminaries}


Let
\begin{equation*}
I=I_{m|n}=\{\overline{m},\overline{m-1},\ldots,\overline{1},1,2,\ldots, n\}
\end{equation*}
be a set with a total order
\[
\overline{m}<\overline{m-1}<\cdots<\overline{1}<1<2<\cdots<n.
\]

Let
$\mathfrak{g}=\mathfrak{gl}(m|n)=\mathfrak{g}_{\overline{0}}\oplus \mathfrak{g}_{\overline{1}}$
be the general linear Lie superalgebra with standard basis $\{E_{i,j}~|~i,j\in I\}$. 
Then $\mathfrak g$ admits a natural $\mathbb{Z}$-grading
$\mathfrak{g}=\mathfrak{g}_{-1}\oplus\mathfrak{g}_{0}\oplus\mathfrak{g}_1$ where $\mathfrak{g}_{0}=\mathfrak{g}_{\overline{0}}$ and $\mathfrak{g}_{\pm 1}$ is of the form
\[
\mathfrak{g}_{-1}=\left\{
\begin{pmatrix}0 &0 \\C_{n\times m}&0
\end{pmatrix}
\right\},\qquad
\mathfrak{g}_{1}=\left\{
\begin{pmatrix}0 &B_{m\times n}\\0 &0
\end{pmatrix}
\right\}.
\]
Let $\mathfrak{h}$ be the Cartan subalgebra consisting of diagonal matrices and let $\mathfrak{g}=\mathfrak{n}^-\oplus \mathfrak{h}\oplus \mathfrak{n}^+$ be a triangular decomposition,
where $\mathfrak{n}^-$ (resp. $\mathfrak{n}^+$) consists of lower (resp. upper) triangular matrices. Let $\{\delta_m,\ldots,\delta_1,\epsilon_1,\ldots,\epsilon_n\}\subset \mathfrak{h}^*$ be the dual basis of
$\{E_{\overline{m},\overline{m}},\ldots,E_{\overline{1},\overline{1}},E_{1,1},\ldots,E_{n,n}\}\subset\mathfrak{h}$.
A nondegenerate symmetric form $(\cdot, \cdot)$ on $\mathfrak{h}^*$ induced by an invariant symmetric form on $\mathfrak{g}$ is given by
\begin{equation*}
(\delta_i,\epsilon_k)=0 \quad \forall i,k,
\qquad
(\delta_i,\delta_j)= -(\epsilon_i,\epsilon_j) =
\begin{cases}
1 & \text{ if } i=j,\\
0 & \text{ if } i\neq j.
\end{cases}
\end{equation*}


The set of simple roots and the set of roots are
\begin{align*}
\Pi &=\{\delta_m-\delta_{m-1},\ldots,\delta_2-\delta_1,\delta_1-\epsilon_1,\epsilon_1-\epsilon_2,\ldots,\epsilon_{n-1}-\epsilon_n\},
\\
\Phi &=\{\delta_i-\delta_j,\epsilon_p-\epsilon_q,\pm(\delta_i-\epsilon_p)~|~\overline{m}\leq \overline{i}\neq \overline{j}\leq \overline{1},1\leq p\neq q\leq n\}.
\end{align*}

Furthermore, the sets of positive even roots and positive odd roots are given by
\begin{align*}
\Phi_0^+ &=\{\delta_i-\delta_j,\epsilon_p-\epsilon_q~|~\overline{m}\leq \overline{i}<\overline{j}\leq \overline{1},1\leq p<q\leq n\},
\\
\Phi_1^+ &=\{\delta_i-\epsilon_p~|~\overline{m}\leq \overline{i}< 1\leq p\leq n\}.
\end{align*}
The set of even roots and the set of odd roots are
$\Phi_0=\Phi_0^+\cup(-\Phi_0^+)$ and $\Phi_1=\Phi_1^+\cup (-\Phi_1^+)$, respectively.
The root space decomposition is $\mathfrak{g}=\mathfrak{h}\oplus (\bigoplus_{\alpha\in\Phi}\mathfrak{g}_{\alpha})$, where
\[
\mathfrak{g}_{\delta_i-\delta_j} =\mathbb{C}E_{\overline{i},
\overline{j}}, \quad \mathfrak{g}_{\epsilon_p-\epsilon_q} =\mathbb{C}E_{p,q},\quad
\mathfrak{g}_{\delta_i-\epsilon_p} =\mathbb{C}E_{\overline{i},p},\quad
\mathfrak{g}_{\epsilon_p-\delta_i} =\mathbb{C}E_{p,\overline{i}}.
\]

A weight $\lambda\in\mathfrak{h}^*$ is {\em integral} if $(\lambda,\alpha)\in\mathbb{Z}$ for all $\alpha\in\Phi_0$, and is {\em dominant} if $(\lambda,\alpha)\in\mathbb{Z}_{\geq0}$ for all $\alpha\in\Phi_0^+$. We denote by $P^+$ the set of all integral dominant weights.
We recall the Weyl vector
\begin{equation*}
\rho=\frac{1}{2}\Big(\sum_{\alpha\in\Phi_0^+}\alpha-\sum_{\alpha\in\Phi_1^+}\alpha\Big)
=\sum_{i=1}^mi\delta_i-\sum_{j=1}^nj\epsilon_j-\frac{m+n+1}{2}{\mathbf 1}_{m|n},
\end{equation*}
where ${\mathbf 1}_{m|n}=\sum_{i=1}^m\delta_i-\sum_{j=1}^n\epsilon_j$.
If there is a positive odd root $\beta\in\Phi_1^+$ such that $(\lambda+\rho,\beta)=0$, then the weight $\lambda$ is called {\em atypical} (or more precisely, {\em $\beta$-atypical}).

In this paper, a weight $\lambda=a_m\delta_m+\cdots+a_1\delta_1+b_1\epsilon_1+\cdots+b_n\epsilon_n\in\mathfrak{h}^*$
will be denoted by
\[
\lambda=(a_m,\ldots,a_1~|~b_1,\ldots,b_n).
\]
The following clearly holds:
\begin{align}
\lambda ~\mbox{is integral}~ &\Leftrightarrow a_i-a_j, b_p-b_q\in\mathbb{Z}, \forall \overline{m}\leq \overline{i}<\overline{j} \leq\overline{1}, 1\leq p<q\leq n;\nonumber\\
\lambda ~\mbox{is dominant}~ &\Leftrightarrow a_i-a_j, b_p-b_q\geq 0, \forall \overline{m}\leq \overline{i}<\overline{j} \leq\overline{1}, 1\leq p<q\leq n;\nonumber\\
\lambda ~\mbox{is $(\delta_s-\epsilon_t)$-atypical}~&\Leftrightarrow a_s+b_t+s-t=0.\label{asbt}
\end{align}

\subsection{Singular vectors in Verma modules}

A non-zero vector $v$ in a $\mathfrak{g}$-module $V$ is called {\em singular} if $\mathfrak{n}^+\cdot v=0$.
Recall the Verma module of highest weight $\lambda$ is
$M(\lambda)=\mathrm{Ind}_{\mathfrak{h}\oplus\mathfrak{n}^+}^{\mathfrak{g}}\mathbb{C}1_\lambda,
$
where $h\cdot 1_\lambda=\lambda(h)1_\lambda$ and $\mathfrak{n}^+\cdot 1_\lambda=0$;  the highest weight vector of $M(\la)$ is denoted by $\hwv =1\otimes 1_\la$.

Let $\beta=\delta_s-\epsilon_t$. Assume $\lambda=(a_m,a_{m-1},\ldots,a_1~|~b_1,b_2,\ldots,b_n)$ is a $\beta$-atypical weight.
Define a sequence of scalars
\begin{align}
\label{cvalue1}
c_{\overline{s-1}}=a_s-a_{s-1},
\quad
 c_{\overline{s-2}}=a_s-a_{s-2}+1,
\quad\quad \ldots,
\quad
c_{\overline{1}}=a_s-a_1+s-2;
\\
\label{cvalue2}
c_{t-1}=b_t-b_{t-1}-1,
\quad
 c_{t-2}=b_{t}-b_{t-2}-2,
\quad
 \ldots,
\quad
c_1=b_t-b_1-t+1.
\end{align}
Denote
\begin{equation} \label{eq:JJ}
\JJ =\big\{ J=\{j_1,j_2,\ldots, j_p\}\subset I \mid \overline{s}<j_1<j_2<\cdots<j_p<t, \; 0\le p \leq s+t-2 \big\},
\end{equation}
where it is understood that $J=\emptyset$ if $p=0$.
Then
\[
\{E_J:=E_{t,j_p}E_{j_p,j_{p-1}}\cdots E_{j_1,\overline{s}}~|~J=\{j_1,j_2,\ldots, j_p\}\in\JJ\}
\]
forms a basis of the $(-\beta)$-weight subspace of $\mathrm{U}(\mathfrak{n}^-)$; here it is understood that $E_\emptyset =E_{t,\bar{s}}$.
For any $J\in\JJ$, define
\begin{equation}\label{eq:dj}
d_{J}=\prod_{\overline{s}<k<t,k\not\in J}c_{k},
\end{equation}
where it is understood that $d_{\{\overline{s-1},\overline{s-2},\ldots,\overline{1},1,2,\ldots, t-1\}}=1$.
Introduce the following vector in $M(\la)$:
\begin{equation}
  \label{sing}
S_{-\beta}=\sum_{J\in\JJ}d_J E_J, 
\end{equation}
which contains a unique term (called the {\em leading term}) with $J$ of maximal cardinality $E_{t,t-1}\cdots E_{2,1}E_{1,\overline{1}}E_{\overline{1},\overline{2}}\cdots E_{\overline{s-1},\overline{s}}$ of coefficient $1$.

\begin{thm}\label{thm:Vsingular}
Let $\beta=\delta_s-\epsilon_t$ and let $\lambda=(a_m,a_{m-1},\ldots,a_1~|~b_1,b_2,\ldots,b_n)$ be a $\beta$-atypical weight.
Then the element $S_{-\beta}\hwv$ is the unique (up to a nonzero scalar multiple) singular vector in $M(\lambda)$ of weight $\lambda-\beta$.
In particular, we have
\begin{equation*}
\mathrm{Hom}_\mathfrak{g} \big(M(\lambda-\beta),M(\lambda) \big)=\mathbb{C}.
\end{equation*}
\end{thm}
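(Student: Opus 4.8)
The plan is to establish the three assertions in order: (i) that $S_{-\beta}\hwv$ is singular, (ii) that it is the unique such vector up to scalar, and (iii) the Hom-space identity. Statement (iii) is formal once (i)–(ii) are known: by the universal property of Verma modules a map $M(\la-\beta)\to M(\la)$ is the same datum as the image of the highest weight vector $v^+_{\la-\beta}$, which must be an $\mathfrak h$-weight vector of weight $\la-\beta$ annihilated by $\mathfrak n^+$, and conversely; hence $\mathrm{Hom}_\mathfrak g(M(\la-\beta),M(\la))$ is canonically the space of singular vectors of weight $\la-\beta$ in $M(\la)$, which by (i)–(ii) is one-dimensional.

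For existence (i), I would first reduce to the simple positive root vectors: since $\mathfrak n^+$ is generated by them and $S_{-\beta}$ has weight $-\beta$ with $\hwv$ highest, it suffices to check $[E_\alpha,S_{-\beta}]\hwv=0$ for each simple $\alpha\in\Pi$. Treating $\mathrm{ad}\,E_\alpha$ as a (super)derivation along each chain $E_J=E_{t,j_p}\cdots E_{j_1,\overline s}$ and using $[E_{i,j},E_{k,l}]=\delta_{j,k}E_{i,l}-(-1)^{|E_{i,j}||E_{k,l}|}\delta_{l,i}E_{k,j}$, each commutator produces either index-shift terms (one entry of the chain moved one step in the order, possibly splitting the chain in two) or a Cartan contraction acting on $\hwv$ by an explicit scalar. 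Collecting the coefficient of each resulting monomial, the vanishing reduces to numerical identities among the $d_J$, hence by \eqref{eq:dj} among the $c_k$. For the even simple roots these are the telescoping relations $c_r-c_{r+1}=b_{r+1}-b_r-1$ and $c_{\overline{i-1}}-c_{\overline i}=a_i-a_{i-1}+1$, which hold by the definitions \eqref{cvalue1}--\eqref{cvalue2}; for the unique odd simple root $\delta_1-\epsilon_1$ the surviving coefficient collapses to $a_s+b_t+s-t$, so the cancellation is exactly the atypicality hypothesis \eqref{asbt}.

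For uniqueness (ii), let $w=\sum_{J\in\JJ}e_J E_J\hwv$ be an arbitrary singular vector of weight $\la-\beta$; I must show the $e_J$ are determined up to one overall scalar. The $\mathbb Z$-grading organizes this: every weight-$(\la-\beta)$ vector lies in the degree $-1$ component $\mathfrak g_{-1}\otimes M_0(\la)$, where $M_0(\la)$ is the $\mathfrak g_0=\mathfrak{gl}(m)\oplus\mathfrak{gl}(n)$-Verma module, and the even-root equations $E_\alpha w=0$ say precisely that $w$ is a $\mathfrak g_0$-highest weight vector, i.e. lives in $(V_m^*\otimes M_0(\la))\boxtimes(V_n\otimes M_0(\la))$ with $V_m^*,V_n$ the minuscule (dual) natural modules. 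I would then expand the full system of simple-root equations in the monomial bases of the neighboring weight spaces and order $\JJ$ by cardinality. The aim is to show this system is triangular: reading each equation off a suitable monomial expresses a coefficient $e_J$ in terms of the $e_{J'}$ with $J'=J\cup\{k\}$ obtained by inserting one index, the multipliers being the explicit $c_k$'s and Cartan scalars computed in (i). Iterating from the leading term $E_{J_{\max}}$ downward determines every $e_J$ from $e_{J_{\max}}$, giving a solution space of dimension $\le 1$; together with the nonzero solution $S_{-\beta}\hwv$ from (i), it is exactly $\mathbb C\cdot S_{-\beta}\hwv$.

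The main obstacle is precisely this uniqueness step. Unlike the classical case, $\mathrm U(\mathfrak n^-)$ is not a domain for a Lie superalgebra, so Verma's argument bounding $\dim\mathrm{Hom}(M(\mu),M(\la))$ by $1$ is unavailable, and the bound must be wrung out of the explicit structure of the $(\la-\beta)$-weight space. Moreover the even-root equations alone only force $w$ to be $\mathfrak g_0$-highest weight, and for degenerate $\la$ the space of such vectors can exceed one dimension (an extra $\mathfrak g_0$-linkage in the $\delta$- or $\epsilon$-block); it is the odd-root equation, through the atypicality \eqref{asbt}, that renders the overdetermined system consistent and cuts the solution space down to a line. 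Carrying out the triangularity-and-consistency bookkeeping uniformly in $\la$ — not merely generically — is where the real work of the proof lies.
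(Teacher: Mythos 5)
Your proposal is correct and follows essentially the same route as the paper's proof: existence is checked against the simple root vectors exactly as you describe (the telescoping identities $c_{j-1}-c_j=b_j-b_{j-1}-1$, $c_{\overline{i-1}}-c_{\overline{i}}=a_i-a_{i-1}+1$ for the even simple roots, and atypicality \eqref{asbt} collapsing the coefficient at the odd simple root $E_{\overline{1},1}$), and uniqueness is proved by the same downward induction on $|J|$ through a triangular system of coefficient equations anchored at the leading term. The one inaccuracy is in your closing assessment of where the difficulty lies: in each determining equation the unknown coefficient $e_J$ enters with coefficient exactly $1$, while the Cartan scalars (such as $b_{t-1}-b_t+1$) multiply only coefficients $e_{J'}$ with $|J'|>|J|$ (or same-cardinality coefficients handled earlier) already known to vanish, so the bound $\dim\leq 1$ holds uniformly in $\la$ with no appeal whatsoever to \eqref{asbt} or to consistency of the overdetermined system --- atypicality is needed only for existence, so your picture that the odd-root equation ``through atypicality cuts the solution space down to a line'' reverses the actual logic of the argument.
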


\begin{proof}
To show that $S_{-\beta}\hwv$ is singular, it suffices to verify that
\[
E_{\overline{i},\overline{i-1}}\cdot S_{-\beta}\hwv
= E_{j-1,j}\cdot S_{-\beta}\hwv
=E_{\overline{1},1}\cdot S_{-\beta}\hwv =0,
\quad (\forall 1< i\leq m, 1< j\leq n).
\]

Indeed, it is trivial that
$
E_{j-1,j}\cdot S_{-\beta}\hwv=0 \quad (\forall j> t).
$ 
Furthermore,
\begin{align*}
E_{t-1,t}\cdot S_{-\beta}\hwv
&=E_{t-1,t}\cdot\sum_{t-1\in J\in\JJ} \Big(d_J( E_{t,t-1}E_{t-1,\sharp}\cdots)+d_Jc_{t-1}(E_{t,\sharp}\cdots) \Big)\\
&=\sum_{t-1\in J\in\JJ}d_J(b_{t-1}-b_t+1+c_{t-1})(E_{t-1,\sharp}\cdots)=0,
\end{align*}
where the last equation uses the identity $b_{t-1}-b_t+1+c_{t-1}=0$ which follows by \eqref{cvalue2}.

For $1< j<t$, we have
\begin{align*}
&E_{j-1,j}\cdot S_{-\beta}\hwv\\
&=E_{j-1,j}\cdot\sum_{j,j-1\in J\in\JJ}\left(
\begin{array}{l}
d_J(\cdots E_{*,j}E_{j,j-1}E_{j-1,\sharp}\cdots)+d_Jc_j(\cdots E_{*,j-1}E_{j-1,\sharp}\cdots)\\
+d_Jc_{j-1}(\cdots E_{*,j}E_{j,\sharp}\cdots)
+d_Jc_jc_{j-1}(\cdots E_{*,\sharp}\cdots)
\end{array}
\right)\hwv\\
&=\sum_{j,j-1\in J\in\JJ}d_J(b_{j-1}-b_j+1-c_j+c_{j-1})(\cdots E_{*,j}E_{j-1,\sharp}\cdots)\hwv=0,
\end{align*}
where the last equation uses the identity $b_{j-1}-b_j+1-c_j+c_{j-1}=0$ which follows by \eqref{cvalue2}.

Summarizing we have known $E_{j-1,j}\cdot S_{-\beta}\hwv=0 ~(\forall 1<j\leq n)$. In an entirely similar way, we can show $E_{\overline{i},\overline{i-1}}\cdot S_{-\beta}\hwv=0 ~(\forall 1<i\leq m)$.

In addition, we compute that
\begin{align*}
&E_{\overline{1},1}\cdot S_{-\beta}\hwv\\
&=E_{\overline{1},1}\cdot\sum_{\overline{1},1\in J\in\JJ}\left(
\begin{array}{l}
d_J(\cdots E_{*,1}E_{1,\overline{1}}E_{\overline{1},\sharp}\cdots)+d_Jc_{\overline{1}}(\cdots E_{*,1}E_{1,\sharp}\cdots)\\
+d_Jc_{1}(\cdots E_{*,\overline{1}}E_{\overline{1},\sharp}\cdots)
+d_Jc_{\overline{1}}c_{1}(\cdots E_{*,\sharp}\cdots)
\end{array}
\right)\hwv\\
&=\sum_{\overline{1},1\in J\in\JJ}d_J(b_{1}+a_1+1+c_1+c_{\overline{1}})(\cdots E_{*,1}E_{\overline{1},\sharp}\cdots)\hwv\\
&=\sum_{\overline{1},1\in J\in\JJ}d_J(a_s+b_t+s-t)(\cdots E_{*,1}E_{\overline{1},\sharp}\cdots)\hwv \quad\quad\quad
\mbox{by \eqref{cvalue1}-\eqref{cvalue2}}\\
&=0 \quad\quad\quad \mbox{by \eqref{asbt}}.
\end{align*}
(In the above calculation, we implicitly assume $s,t>1$. If either $s=1$ or $t=1$, the argument is similar and much easier.)

Hence we have verified that $S_{-\beta}\hwv$ is a singular vector of weight ($\lambda-\beta$) in $M(\lambda)$.

It remains to prove the uniqueness. In a nutsell, the reason for the uniqueness is that the coefficients of summands in $S_{-\beta}\hwv$ (see \eqref{sing}) are determined recursively by the requirement that $S_{-\beta}\hwv$ is singular once the leading coefficient is fixed. More precisely, let
\[
S'_{-\beta}=\sum_{J\in\JJ}z_JE_{t,j_p}E_{j_p,j_{p-1}}\cdots E_{j_1,\overline{s}}, \qquad (\text{for } z_J\in\mathbb{C})
\]
be a vector of weight $\la-\beta$ which is annihilated by $\mathfrak{n}^+$. Subtracting $S'_{-\beta}$ by a suitable multiple of $S_{-\beta}$ if necessary, we may assume
\begin{equation}\label{za0}
z_{\{\overline{s-1},\overline{s-2},\ldots,\overline{1},1,2,\ldots, t-1\}}=0.
\end{equation}
We will show that $z_J=0$ for any $J\in\JJ$ by downward induction on the cardinality $|J|$.

The base step is \eqref{za0}, as $J=\{\overline{s-1},\ldots,\overline{1},1,\ldots,t-1\}$ has the maximal cardinality among $J\in \JJ$ in \eqref{eq:JJ}. Suppose we have known that $z_{J'}=0$ for any $J'\in\JJ$ with $|J'|>k$.
We consider $J\in \JJ$ with $|J|=k$.

If $t-1\not\in J$, it follows from $E_{t-1,t}\cdot S'_{-\beta}\hwv=0$ that
$z_{J\cup\{t-1\}}(b_{t-1}-b_t+1)+z_J=0.$
Thus $z_J=0$ because of $z_{J\cup\{t-1\}}=0$ by the inductive assumption.

If $t-1\in J$ but $t-2\not\in J$, it follows from $E_{t-2,t-1}\cdot S'_{-\beta}\hwv=0$ that $$z_{J\cup\{t-2\}}(b_{t-2}-b_{t-1}+1)+z_J-z_{(J\cup\{t-2\})\setminus\{t-1\}}=0.$$
Thanks to $z_{J\cup\{t-2\}}=0$ by inductive assumption and $z_{(J\cup\{t-2\})\setminus\{t-1\}}=0$ established in the previous case, we obtain $z_J=0$.

Similarly, we can show $z_J=0$ if $t-1,t-2\in J$ but $t-3\not\in J$, by $E_{t-3,t-2}\cdot S'_{-\beta}\hwv=0$. Repeating this procedure, we see that $z_J=0$ for all $J\in\JJ$ with $|J|=k$.

Therefore we obtain that $S'_{-\beta}=0$, and the uniqueness follows.
The uniqueness of the singular vector implies that (and is equivalent to) $\mathrm{Hom}_\mathfrak{g} \big(M(\lambda-\beta), M(\lambda) \big)=\mathbb{C}$.  
\end{proof}

It was known \cite[Lemma 5.4]{Se96} that $\mathrm{Hom}_\mathfrak{g} \big(M(\lambda-\beta),M(\lambda) \big) \neq 0$.

\begin{example}
Let us write down explicitly some cases of odd singular vectors in Theorem~\ref{thm:Vsingular}. Keep the notation
$\lambda=(a_m,\ldots,a_1~|~b_1,\ldots,b_n)\in\mathfrak{h}^*$.
\begin{enumerate}
\item
Let $\beta=\delta_2-\epsilon_1$ and let $\lambda$ be $\beta$-atypical, i.e., $a_2+b_1+1=0$.
Then
\[
S_{-\beta}=E_{1, \overline{1}}E_{\overline{1}, \overline{2}}+(a_2-a_1)E_{1,\overline{2}}.
\]

\item
Let $\beta=\delta_3-\varepsilon_1$ and let $\lambda$ be $\beta$-atypical, i.e., $a_3+b_1+2=0$.
Then
\begin{align*}
S_{-\beta}=&
E_{1,\overline{1}}E_{\overline{1},\overline{2}}E_{\overline{2},\overline{3}}+
(a_3-a_2)E_{1,\overline{1}}E_{\overline{1},\overline{3}}\\
&+(a_3-a_1+1)E_{1,\overline{2}}E_{\overline{2},\overline{3}}+
(a_3-a_1+1)(a_3-a_2)E_{1,\overline{3}}.
\end{align*}

\item
Let $\beta=\delta_2-\varepsilon_2$ and let $\lambda$ be $\beta$-atypical, i.e., $a_2+b_2=0$.
Then
\begin{align*}
S_{-\beta}=&E_{2,1}E_{1,\overline{1}}E_{\overline{1},\overline{2}}
+(a_2-a_1)E_{2,1}E_{1,\overline{2}}\\
&+(b_2-b_1-1)E_{2,\overline{1}}E_{\overline{1},\overline{2}}
+(a_2-a_1)(b_2-b_1-1)E_{2,\overline{2}}.
\end{align*}

\end{enumerate}
\end{example}

Inspired by the formulas \eqref{cvalue1}-\eqref{cvalue2}, we introduce the following elements in $\mathbb C +\mathfrak h \subset U(\mathfrak h)$:
\begin{align*}
C_{\overline{s-1}}=E_{\overline{s},\overline{s}}-E_{\overline{s-1},\overline{s-1}},
\quad
& C_{\overline{s-2}}=E_{\overline{s},\overline{s}}-E_{\overline{s-2},\overline{s-2}}+1,
\quad \ldots,
&C_{\overline{1}}=E_{\overline{s},\overline{s}}-E_{\overline{1},\overline{1}}+s-2,
\\
C_{t-1}=E_{t,t}-E_{t-1,t-1}-1,
\quad
& C_{t-2}=E_{t,t}-E_{t-2,t-2}-2,
\quad
 \ldots,
&C_1=E_{t,t}-E_{1,1}-t+1.
\end{align*}
Denote the Borel subalgebra $\mathfrak{b}^- =\mathfrak{n}^- \oplus \mathfrak{h}$. By definition, a Shapovalov element  is an element $\theta_{\beta} \in \mathrm{U}(\mathfrak{b}^-)_{-\beta}$ such that $\theta_{\beta} \hwv$ is a singular vector of weight $\la -\beta$ for any $\beta$-atypical weight $\la$. We have the following corollary to Theorem~\ref{thm:Vsingular}.

\begin{cor}
 \label{cor:Selement}
Let $\beta=\delta_s-\epsilon_t \in \Phi_1^+$. Then the element
\begin{equation*}
\theta_{\beta} =\sum_{J\in\JJ} E_J \Big(\prod_{\overline{s}<k<t,k\not\in J}C_{k} \Big)
\end{equation*}
is the unique element in $\mathrm{U}(\mathfrak{b}^-)_{-\beta}$ which satisfies $\theta_\beta \hwv =S_{-\beta}\hwv$ for any $\beta$-atypical weight $\la$; hence it is a Shapovalov element.
\end{cor}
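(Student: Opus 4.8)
The plan is to deduce the corollary directly from Theorem~\ref{thm:Vsingular} by a short computation, the only genuinely delicate point being the exact sense in which $\theta_\beta$ is \emph{unique}. I would begin by recording the PBW decomposition $\mathrm{U}(\mathfrak{b}^-)_{-\beta}=\bigoplus_{J\in\JJ}E_J\cdot\mathrm{U}(\mathfrak{h})$, which holds since $\{E_J\mid J\in\JJ\}$ is a basis of $\mathrm{U}(\mathfrak{n}^-)_{-\beta}$ and $\mathrm{U}(\mathfrak{b}^-)\cong\mathrm{U}(\mathfrak{n}^-)\otimes\mathrm{U}(\mathfrak{h})$. Thus every element of $\mathrm{U}(\mathfrak{b}^-)_{-\beta}$ is uniquely $\sum_{J}E_Jh_J$ with $h_J\in\mathrm{U}(\mathfrak{h})$, and it acts on $\hwv$ by $\sum_J h_J(\la)E_J\hwv$, where $h_J(\la)$ is the scalar by which $h_J\in\mathrm{U}(\mathfrak{h})=S(\mathfrak{h})$ acts on the highest weight vector.

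The heart of the computation is the identity $C_k\hwv=c_k\hwv$ for every $k$ with $\overline{s}<k<t$, valid for an \emph{arbitrary} weight $\la$; this is immediate by comparing the definitions of the $C_k$ with \eqref{cvalue1}--\eqref{cvalue2}, e.g. $C_{t-1}=E_{t,t}-E_{t-1,t-1}-1$ acts by $b_t-b_{t-1}-1=c_{t-1}$. Since the $C_k$ lie in the commutative algebra $\mathrm{U}(\mathfrak{h})$, the factor $\prod_{\overline{s}<k<t,\,k\notin J}C_k$ acts on $\hwv$ by $\prod_{\overline{s}<k<t,\,k\notin J}c_k=d_J$, so that
\[
\theta_\beta\hwv=\sum_{J\in\JJ}\Big(\prod_{\overline{s}<k<t,\,k\notin J}c_k\Big)E_J\hwv=\sum_{J\in\JJ}d_JE_J\hwv=S_{-\beta}\hwv
\]
for every $\la$, in particular for every $\beta$-atypical $\la$. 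Together with Theorem~\ref{thm:Vsingular}, which makes $S_{-\beta}\hwv$ a singular vector of weight $\la-\beta$ when $\la$ is $\beta$-atypical, this shows $\theta_\beta$ is a Shapovalov element.

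For uniqueness I would suppose $\theta'=\sum_J E_Jh_J$ also satisfies $\theta'\hwv=S_{-\beta}\hwv$. Because $\{E_J\hwv\}$ is a basis of the $(\la-\beta)$-weight space of $M(\la)$, comparing coefficients forces $h_J(\la)=d_J$. As $\mathrm{U}(\mathfrak{h})=S(\mathfrak{h})$ injects into the polynomial functions on $\mathfrak{h}^*$, an element of $\mathrm{U}(\mathfrak{h})$ is determined by the function $\la\mapsto h_J(\la)$; since $\prod_{k\notin J}C_k$ induces exactly $\la\mapsto d_J$, imposing the identity for all $\la$ yields $h_J=\prod_{k\notin J}C_k$ and hence $\theta'=\theta_\beta$.

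The main obstacle, and the step requiring care, is that the defining condition $\theta'\hwv=S_{-\beta}\hwv$ is only required on the atypical hyperplane $\{a_s+b_t+s-t=0\}$, where $h_J$ and $d_J$ need merely agree; a priori this pins $h_J$ down only modulo the principal ideal of $\mathrm{U}(\mathfrak{h})$ generated by the atypicality element $E_{\overline{s},\overline{s}}+E_{t,t}+(s-t)$. The resolution is that the displayed identity in fact holds identically in $\la$, so the condition may be imposed for all $\la$; equivalently, one checks the transversality that the linear parts of $\{C_k\mid\overline{s}<k<t\}$ together with that atypicality element are linearly independent, whence no nonzero element of the subalgebra generated by the $C_k$ — in which the coefficients of $\theta_\beta$ live — vanishes on the atypical hyperplane. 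I expect establishing this transversality, and thereby making the uniqueness precise, to be the only nonroutine ingredient, the existence and Shapovalov property being immediate from Theorem~\ref{thm:Vsingular}.
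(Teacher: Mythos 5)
Your existence argument is correct, and it is precisely the (unwritten) argument behind the corollary in the paper: the paper states the corollary with no separate proof, the point being exactly your observation that, via the PBW factorization $\mathrm{U}(\mathfrak{b}^-)_{-\beta}=\bigoplus_{J\in\JJ}E_J\,\mathrm{U}(\mathfrak{h})$, each $C_k$ acts on $\hwv$ by the scalar $c_k$, so $\theta_\beta\hwv=\sum_{J\in\JJ}d_JE_J\hwv=S_{-\beta}\hwv$ for \emph{every} weight $\la$, and Theorem~\ref{thm:Vsingular} then makes $\theta_\beta\hwv$ singular whenever $\la$ is $\beta$-atypical.

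The uniqueness discussion, however, contains a genuine error: the two readings you declare ``equivalent'' are not equivalent, and the literal statement (uniqueness within all of $\mathrm{U}(\mathfrak{b}^-)_{-\beta}$ when the condition is imposed only for $\beta$-atypical $\la$) is actually false, so no transversality computation can establish it. Concretely, set $H_\beta=E_{\overline{s},\overline{s}}+E_{t,t}+(s-t)\in\mathbb{C}+\mathfrak{h}$; then $H_\beta\hwv=(a_s+b_t+s-t)\hwv=0$ for every $\beta$-atypical $\la$ by \eqref{asbt}, so for any fixed $J_0\in\JJ$ the element $\theta_\beta+E_{J_0}H_\beta$ lies in $\mathrm{U}(\mathfrak{b}^-)_{-\beta}$, differs from $\theta_\beta$ (no zero divisors in $\mathrm{U}(\g)$, or PBW), and satisfies the same defining condition on the atypical hyperplane. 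Your transversality argument is correct as far as it goes, but it only yields uniqueness within the restricted class of elements whose $\mathrm{U}(\mathfrak{h})$-coefficients lie in the subalgebra generated by the $C_k$, which is weaker than what the corollary asserts. The correct repair is your first alternative, stated as a change of hypothesis rather than as an equivalence: since $S_{-\beta}$ is defined for arbitrary $\la$, impose the condition $\theta\hwv=S_{-\beta}\hwv$ for \emph{all} weights $\la$ (equivalently, settle for uniqueness modulo the ideal $\mathrm{U}(\mathfrak{b}^-)_{-\beta}H_\beta$ if one insists on atypical weights only, which is how Shapovalov elements are pinned down in Musson's work cited in the paper). Under that reading your argument is complete: $h_J(\la)=d_J(\la)$ for all $\la$ forces $h_J=\prod_{\overline{s}<k<t,\,k\notin J}C_k$ in $\mathrm{U}(\mathfrak{h})$, hence $\theta=\theta_\beta$.
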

Some other more complicated expressions for a Shapovalov element associated to $\beta\in \Phi_1^+$ in terms of non-commutative determinants were also obtained by Musson in \cite[Theorems~9.1, 9.2]{Mu17}. By the uniqueness, these two versions of odd Shapovalov elements coincide.

\subsection{Singular vectors in Kac modules}

For any dominant integral  weight $\lambda\in P^+$, let $L^{0}(\lambda)$ be the finite dimensional irreducible
$\mathfrak{g}_0$-module of highest weight $\lambda$. Extending the $\mathfrak g_{0}$-module $L^0(\la)$ to a $\mathfrak{g}_0\oplus \mathfrak{g}_1$-module
by a trivial $\mathfrak{g}_1$-action, we define the (finite-dimensional) Kac module as
\begin{equation}\label{kacmodule}
K(\lambda):=\mathrm{Ind}_{\mathfrak{g}_0\oplus \mathfrak{g}_1}^{\mathfrak{g}}L^{0}(\lambda).
\end{equation}

For any dominant integral weight $\lambda=(a_m,a_{m-1},\ldots,a_1~|~b_1,b_2,\ldots,b_n)\in P^+,$
it is well known that
\begin{equation}\label{KMI}
K(\lambda)=M(\lambda)/I_\lambda
\end{equation} where $I_\lambda$
is the $\mathfrak{g}$-submodule generated by the singular vectors
\begin{equation}\label{generatorsI}
\left\{ E_{\overline{i-1},\overline{i}}^{a_i-a_{i-1}+1}\hwv,  E_{j+1,j}^{b_{j}-b_{j+1}+1} \hwv~\middle|~1<i\leq m, 1\leq j<n\right\}.
\end{equation}

Let $J_\lambda$ be the $\mathfrak{n}^-$-submodule of $M(\lambda)$ generated by
\begin{equation}
 \label{eq:ideal J}
\left\{E_{\overline{i-1},\overline{i}} \hwv, E_{j+1,j} \hwv~\middle|~1<i\leq m, 1\leq j<n\right\}.
\end{equation}

\begin{lemma}\label{EijinJ}
\begin{enumerate}
\item
For $n\geq i>j\geq1 ~\mbox{or}~ \overline{1}\geq i>j\geq \overline{m}$, we have  $E_{i,j}\hwv \in J_\lambda$.
\item
For $n\geq t\geq 1> \overline{s}\geq \overline{m}$, we have $E_{t,\overline{s}} \hwv \not \in J_{\lambda}$.
\end{enumerate}
\end{lemma}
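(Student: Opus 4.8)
The plan is to understand $J_\lambda$ as the $\mathfrak{n}^-$-submodule (not $\mathfrak{g}$-submodule) generated by the \emph{simple} negative root vectors applied to $\hwv$, where the simple roots used are precisely the negative simple roots of the even part $\mathfrak{g}_0 = \mathfrak{gl}(m)\oplus\mathfrak{gl}(n)$ (namely $E_{\overline{i-1},\overline{i}}$ and $E_{j+1,j}$), but crucially \emph{not} the odd simple root direction $E_{1,\overline{1}}$. Since $J_\lambda$ is an $\mathfrak{n}^-$-module, it is stable under left multiplication by all of $\mathrm{U}(\mathfrak{n}^-)$, so $J_\lambda = \mathrm{U}(\mathfrak{n}^-)\cdot\{\text{the given generators}\}$. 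The key structural fact I would exploit is that the generators are root vectors for the \emph{even} simple roots, so the interesting question is which monomials $E_{i,j}\hwv$ lie in the span $\mathrm{U}(\mathfrak{n}^-)\cdot J_\lambda$.

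For part (1), I would argue that any negative even root vector $E_{i,j}$ (with $i>j$ in the same block, either both barred or both unbarred) can be written using the commutation relations as a product/bracket of the consecutive simple ones $E_{k+1,k}$ or $E_{\overline{k-1},\overline{k}}$. Concretely, $E_{i,j} = [E_{i,i-1},[E_{i-1,i-2},[\cdots,E_{j+1,j}]\cdots]]$ inside $\mathfrak{g}_0$, so $E_{i,j}\hwv$ is obtained by applying elements of $\mathrm{U}(\mathfrak{n}^-)$ to the generators $E_{k+1,k}\hwv$ (or $E_{\overline{k-1},\overline{k}}\hwv$), hence lies in $J_\lambda$. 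The only subtlety is bookkeeping: when I expand the iterated bracket, the non-simple terms that appear act on $\hwv$ and must themselves be rewritten as $\mathrm{U}(\mathfrak{n}^-)$ times a generator — but since every term in such an expansion is again a product of negative root vectors hitting $\hwv$, and the ``innermost'' factor is a simple negative root vector, everything stays in $\mathrm{U}(\mathfrak{n}^-)\cdot(\text{generators})$. So part (1) is essentially the statement that even negative root vectors are generated by the even simple negative root vectors, which is standard.

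The real content — and the main obstacle — is part (2): showing $E_{t,\overline{s}}\hwv \notin J_\lambda$. Here $E_{t,\overline{s}}$ is an \emph{odd} negative root vector ($\in\mathfrak{g}_{-1}$), and the claim is that no combination from $\mathrm{U}(\mathfrak{n}^-)$ applied to the \emph{even} simple generators can produce it. The natural strategy is a grading/filtration argument: use the $\mathbb{Z}$-grading $\mathfrak{g}=\mathfrak{g}_{-1}\oplus\mathfrak{g}_0\oplus\mathfrak{g}_1$ to track the number of odd negative root vector factors. Observe that each generator in \eqref{eq:ideal J} has the form $E_{\overline{i-1},\overline{i}}\hwv$ or $E_{j+1,j}\hwv$ with the root vector lying in $\mathfrak{g}_0$. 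Via the PBW theorem for $\mathrm{U}(\mathfrak{n}^-)$, $J_\lambda$ is spanned by PBW monomials in which at least one \emph{even} simple negative root vector appears, whereas $E_{t,\overline{s}}$ is a single \emph{odd} root vector and thus lies in the degree-$(-1)$ piece $\mathfrak{g}_{-1}\cdot\hwv$ with no even-block factor at all.

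To make this rigorous I would set up the $\mathrm{U}(\mathfrak{g}_{-1})$-module structure: since $M(\lambda) \cong \mathrm{U}(\mathfrak{g}_{-1})\otimes_{\mathbb{C}} \mathrm{U}(\mathfrak{n}^-_0)\,\hwv$ as vector spaces (with $\mathfrak{n}^-_0 = \mathfrak{n}^-\cap\mathfrak{g}_0$), I can project $M(\lambda)$ onto the ``lowest'' $\mathrm{U}(\mathfrak{g}_{-1})$-degree part by quotienting out the action of $\mathfrak{n}^-_0$. Under this projection, every generator of $J_\lambda$ maps to zero (each contains an even factor $E_{\overline{i-1},\overline{i}}$ or $E_{j+1,j}$ from $\mathfrak{n}^-_0$ acting directly on $\hwv$), and more carefully every element of $J_\lambda = \mathrm{U}(\mathfrak{n}^-)\cdot(\text{generators})$ still lands in the part containing a surviving $\mathfrak{n}^-_0$-factor, so its image is zero; whereas $E_{t,\overline{s}}\hwv$ maps to the nonzero element $E_{t,\overline{s}}\otimes \hwv$ in $\mathfrak{g}_{-1}\otimes\hwv$. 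I expect the delicate point to be verifying that the projection is well-defined and genuinely kills all of $J_\lambda$ — this requires checking that commuting an odd factor past an even simple factor never cancels the even factor, which is exactly where the grading and the isotropy of the odd roots must be invoked carefully. Once that projection is established, $E_{t,\overline{s}}\hwv$ having nonzero image forces $E_{t,\overline{s}}\hwv\notin J_\lambda$, completing part (2).
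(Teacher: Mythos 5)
Your part (1) is correct and is essentially the paper's own argument: induct on $i-j$ using $E_{i,j}=[E_{i,j+1},E_{j+1,j}]$ (or your left-nested variant), noting that each term is an element of $\mathrm{U}(\mathfrak{n}^-)$ applied either to a generator or to a vector already known to lie in $J_\lambda$.

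In part (2), your overall plan (use PBW to split the purely odd part $\Lambda(\mathfrak{g}_{-1})\hwv$ off from the rest of $M(\lambda)$) is also the paper's plan, but the step you explicitly defer --- that the projection kills all of $J_\lambda$ --- is a genuine gap, and the check you propose for closing it rests on a false premise. Commuting an odd factor past an even simple factor \emph{can} destroy the even factor: $[E_{1,\overline{1}},E_{2,1}]=-E_{2,\overline{1}}$, so $E_{1,\overline{1}}E_{2,1}=E_{2,1}E_{1,\overline{1}}-E_{2,\overline{1}}$. Consequently the vector $E_{1,\overline{1}}E_{2,1}\hwv$, which lies in $J_\lambda$ (apply $E_{1,\overline{1}}\in\mathfrak{n}^-$ to the generator $E_{2,1}\hwv$), acquires the purely odd monomial $-E_{2,\overline{1}}\hwv$ when straightened with even factors on the left --- and $E_{2,\overline{1}}\hwv$ is exactly the vector $E_{t,\overline{s}}\hwv$ (for $t=2$, $s=1$) that you are trying to prove is \emph{not} in $J_\lambda$. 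For the same reason, ``quotienting out the action of $\mathfrak{n}_{\bar 0}^-$'' in the natural sense of passing to $M(\lambda)/\mathfrak{n}_{\bar 0}^-M(\lambda)$ does not work: $\mathfrak{n}_{\bar 0}^-$ is not an ideal of $\mathfrak{n}^-$, so $\mathfrak{n}_{\bar 0}^-M(\lambda)$ is not an $\mathfrak{n}^-$-submodule, and the same example shows $J_\lambda\not\subseteq\mathfrak{n}_{\bar 0}^-M(\lambda)$, since $E_{1,\overline{1}}E_{2,1}\hwv\equiv -E_{2,\overline{1}}\hwv\neq 0$ modulo $\mathfrak{n}_{\bar 0}^-M(\lambda)$.

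The missing idea --- and the way the paper closes this --- is to keep the even factors on the side where they are born, adjacent to $\hwv$, so that no commuting is ever required. Part (1) gives $\mathfrak{n}_{\bar 0}^-\hwv\subseteq J_\lambda$, and since the generators \eqref{eq:ideal J} lie in $\mathfrak{n}_{\bar 0}^-\hwv$, one gets the exact one-sided identification $J_\lambda=\mathrm{U}(\mathfrak{n}^-)\,\mathfrak{n}_{\bar 0}^-\,\hwv$. Here $\mathrm{U}(\mathfrak{n}^-)\mathfrak{n}_{\bar 0}^-$ is a left ideal, so the inclusion $J_\lambda\subseteq \mathrm{U}(\mathfrak{n}^-)\mathfrak{n}_{\bar 0}^-\hwv$ is tautological: elements of $J_\lambda$ are left multiples of vectors $x\hwv$ with $x\in\mathfrak{n}_{\bar 0}^-$, and the even factor $x$ never has to be moved. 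Now PBW with odd factors on the left yields $\mathrm{U}(\mathfrak{n}^-)=\mathrm{U}(\mathfrak{n}_{\bar 1}^-)\mathrm{U}(\mathfrak{n}_{\bar 0}^-)=\mathrm{U}(\mathfrak{n}_{\bar 1}^-)\oplus\mathrm{U}(\mathfrak{n}^-)\mathfrak{n}_{\bar 0}^-$, and since $E_{t,\overline{s}}\in\mathrm{U}(\mathfrak{n}_{\bar 1}^-)$ while $u\mapsto u\hwv$ is injective ($M(\lambda)$ is free of rank one over $\mathrm{U}(\mathfrak{n}^-)$), we conclude $E_{t,\overline{s}}\hwv\notin\mathrm{U}(\mathfrak{n}^-)\mathfrak{n}_{\bar 0}^-\hwv=J_\lambda$. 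In short: your decomposition $M(\lambda)\cong\mathrm{U}(\mathfrak{g}_{-1})\otimes\mathrm{U}(\mathfrak{n}_{\bar 0}^-)\hwv$ is the right one, but without first upgrading $J_\lambda$ to the form $\mathrm{U}(\mathfrak{n}^-)\mathfrak{n}_{\bar 0}^-\hwv$ via part (1), the claim that $J_\lambda$ lies in the kernel of the projection is precisely what remains unproven, and it cannot be rescued by the commutation argument you sketch.
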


\begin{proof}
(1) For any $n\geq i>j\geq1$, we have
\begin{align*}
E_{i,j}\hwv&=E_{i,j+1}E_{j+1,j}\hwv-E_{j+1,j}E_{i,j+1}\hwv\\
&\equiv
-E_{j+1,j}E_{i,j+1}\hwv\equiv\cdots \\
&\equiv (-1)^{i-j-1}E_{j+1,j}E_{j+2,j+1}\cdots E_{i,i-1}\hwv
\equiv 0 \quad (\mathrm{mod}~ J_\lambda).
\end{align*}
The remaining case (with $\overline{1}\geq i>j\geq \overline{m}$) follows by an entirely similar argument.


(2) Let $\mathfrak{n}_{\bar 0}^- =\mathfrak{n}^- \cap \mathfrak{g}_{\bar{0}}, \mathfrak{n}_{\bar 1}^+ =\mathfrak{n}^+ \cap \mathfrak{g}_{\bar{1}}$, and so that $\mathfrak{n}^-=\mathfrak{n}_{\bar 0}^-\oplus \mathfrak{n}_{\bar 1}^-$. Thanks to the PBW theorem, we have
$
\mathrm{U}(\mathfrak{n}^-)=\mathrm{U}(\mathfrak{n}_{\bar 1}^-)\mathrm{U}(\mathfrak{n}_{\bar 0}^-)
=\mathrm{U}(\mathfrak{n}_{\bar 1}^-)\oplus\mathrm{U}(\mathfrak{n}^-)\mathfrak{n}_{\bar 0}^-.
$ 
We have $E_{t,\overline{s}} \in \mathrm{U}(\mathfrak{n}_1^-)$, and hence $E_{t,\overline{s}}\not\in \mathrm{U}(\mathfrak{n}^-)\mathfrak{n}_{\bar 0}^-$. Therefore we have $E_{t,\overline{s}}\hwv \not\in\mathrm{U}(\mathfrak{n}^-)
\mathfrak{n}_{\bar 0}^-\hwv=J_\lambda.$
\end{proof}

We continue to denote by $\hwv$ the highest weight vector in $K(\la)$.

\begin{theorem}
  \label{thm:Ksingular}
Let $\beta\in\Phi_1^+$, $\lambda \in P^+$ and $\lambda-\beta\in P^+$ be such that $(\lambda+\rho,\beta)=0$.
Then $S_{-\beta}\hwv$ is a singular vector of weight $\la -\beta$, unique up to a scalar multiple, in the Kac module $K(\lambda)$. In particular, we have
\begin{equation} \label{eq:SK}
\mathrm{Hom}_\mathfrak{g} \big(K(\lambda-\beta),K(\lambda) \big)=\mathbb{C}.
\end{equation}
\end{theorem}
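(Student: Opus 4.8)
The plan is to establish three things in order: that $S_{-\beta}\hwv$ maps to a vector annihilated by $\mathfrak n^+$ in $K(\la)$, that this image is nonzero, and that it spans the whole space of singular vectors of weight $\la-\beta$ in $K(\la)$. The unifying tool will be the $\mathbb Z$-grading $K(\la)=\bigoplus_{k\ge 0}\Lambda^k(\g_{-1})\otimes L^0(\la)$ induced by $\g=\g_{-1}\oplus\g_0\oplus\g_1$, together with the parallel decomposition $M(\la)_{-1}=\g_{-1}\otimes M^0(\la)$ of the degree $-1$ part, where $M^0(\la)=\mathrm U(\mathfrak n^-_{\bar 0})\hwv$ is the $\g_0$-Verma module. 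Since each monomial $E_J$ ($J\in\JJ$) contains exactly one odd ($\g_{-1}$) factor, the vector $S_{-\beta}\hwv$ is homogeneous of degree $-1$; and because the quotient map $M(\la)\to K(\la)$ restricts on the degree $-1$ part to $\mathrm{id}\otimes\pi$, with $\pi\colon M^0(\la)\to L^0(\la)$ the canonical projection, the entire problem lives inside $\g_{-1}\otimes L^0(\la)$. Singularity is then immediate: the image of $S_{-\beta}\hwv$ is annihilated by $\mathfrak n^+$ since $S_{-\beta}\hwv$ is so in $M(\la)$ by Theorem~\ref{thm:Vsingular}, and hence it is singular once we know it is nonzero.

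The crux is the nonvanishing, and this is where the hypothesis $\la-\beta\in P^+$ and Lemma~\ref{EijinJ} both enter. In degree $-1$ one has $I_\la\cap M(\la)_{-1}=\g_{-1}\otimes N^0\subseteq \g_{-1}\otimes\bigl(\mathfrak n^-_{\bar 0}M^0(\la)\bigr)=J_\la\cap M(\la)_{-1}$, where $N^0$ is the maximal submodule of $M^0(\la)$; so it suffices to show $S_{-\beta}\hwv\not\equiv 0\pmod{J_\la}$. By the proof of Lemma~\ref{EijinJ} one has $M(\la)/J_\la\cong \mathrm U(\g_{-1})\hwv$, whose weight $(\la-\beta)$ degree $-1$ component is spanned by the single vector $E_{t,\overline{s}}\hwv$, nonzero there by Lemma~\ref{EijinJ}(2). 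Thus $S_{-\beta}\hwv\equiv c\,E_{t,\overline{s}}\hwv$ for a scalar $c$ that I must show is nonzero. By Lemma~\ref{EijinJ}(1) every $E_J\hwv$ with $J$ containing a barred index lies in $J_\la$, since its rightmost factor $E_{j_1,\overline{s}}$ is then an even lowering operator applied to $\hwv$; for the remaining $J\subseteq\{1,\dots,t-1\}$ a short telescoping computation using $[E_{j_{a+1},j_a},E_{j_a,\overline{s}}]=E_{j_{a+1},\overline{s}}$ repeatedly shows $E_J\hwv\equiv E_{t,\overline{s}}\hwv\pmod{J_\la}$. Summing the coefficients $d_J$ over such $J$ yields the factorization $c=\bigl(\prod_{r=1}^{s-1}c_{\overline{s-r}}\bigr)\prod_{l=1}^{t-1}(1+c_l)$, and I expect getting this factorization cleanly to be the main obstacle. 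Each factor is then nonzero precisely because $\la-\beta$ is dominant: the first product is strictly positive since $c_{\overline{s-r}}=a_s-a_{s-r}+(r-1)\ge r$, and each $1+c_l$ is strictly negative since $b_l-b_t\ge 1$ forces $1+c_l\le-(t-l)$. Hence $c\neq 0$ and the image is a nonzero singular vector.

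For uniqueness I would first note that the weight $\la-\beta$ occurs in $K(\la)$ only in degree $-1$: comparing the total $\delta$-coefficient on both sides of the identity (sum of $k$ odd negative roots)$+$($\g_0$-weight)$=\la-\beta$ forces $k=1$, since each odd negative root contributes $-1$ and a $\g_0$-weight of $L^0(\la)$ contributes the same $\delta$-total as $\la$. Consequently any singular vector $v$ of weight $\la-\beta$ lies in $\g_{-1}\otimes L^0(\la)$ and, being killed by $\mathfrak n^+_{\bar 0}$, is a $\g_0$-highest weight vector there. Decomposing $\g_{-1}\otimes L^0(\la)$ as a $\g_0=\mathfrak{gl}(m)\oplus\mathfrak{gl}(n)$-module via the Pieri rule (tensoring $L^0(\la)$ with the dual standard $\mathfrak{gl}(m)$-module and the standard $\mathfrak{gl}(n)$-module), the multiplicity of the irreducible of highest weight $\la-\beta$ equals $1$, again exactly because $\la-\beta\in P^+$. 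Hence the space of $\g_0$-highest weight vectors of that weight is one-dimensional, and since $S_{-\beta}\hwv$ is a nonzero such vector, every singular $v$ is a scalar multiple of it. This yields the uniqueness, recovering Serganova's inequality $\dim\mathrm{Hom}_{\g}(K(\la-\beta),K(\la))\le 1$, and the identity \eqref{eq:SK} then follows exactly as in Theorem~\ref{thm:Vsingular}.
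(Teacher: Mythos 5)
Your argument is correct, and it splits naturally into a half that coincides with the paper and a half that genuinely differs.

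The existence/nonvanishing half is essentially the paper's own proof. You reduce modulo the same $\mathfrak{n}^-$-submodule $J_\la$, kill the monomials $E_J$ whose $J$ contains a barred index via Lemma~\ref{EijinJ}(1), telescope the remaining $E_J\hwv$ ($J\subseteq\{1,\dots,t-1\}$) down to $E_{t,\overline{s}}\hwv$, and land on the same coefficient $\bigl(\prod_{\overline{s}<i\leq\overline{1}}c_i\bigr)\prod_{1\leq j<t}(1+c_j)$ as in \eqref{equiv}, with the same dominance inequalities; the factorization you flagged as the ``main obstacle'' is immediate, since summing $d_J$ over all $J\subseteq\{1,\dots,t-1\}$ is just the expansion of $\prod_{l}(1+c_l)$. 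Your only packaging difference here is that you identify $I_\la=\Lambda(\g_{-1})\otimes N^0$ via the induction description of the quotient map and intersect with the degree $-1$ piece, where the paper gets $I_\la\subseteq J_\la$ more cheaply by observing that the generators \eqref{generatorsI} of $I_\la$ already lie in $J_\la$. Where you genuinely diverge is the uniqueness. The paper passes by Frobenius reciprocity to the bound $\dim \mathrm{Hom}_{\g_{\bar 0}}\bigl(L^0(\la-\beta),\Lambda(\g_{-1})\otimes L^0(\la)\bigr)\leq 1$ of \eqref{eq:1}, proved by a leading-term argument: a $\g_{\bar 0}$-singular vector has the form $x\otimes\hwv+\cdots$, and $\Lambda(\g_{-1})_{-\beta}=\Lambda^1(\g_{-1})_{-\beta}$ is one-dimensional. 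You instead use the $\delta$-degree count to force every vector of weight $\la-\beta$ into the degree $-1$ component $\g_{-1}\otimes L^0(\la)$, and then compute the multiplicity of $L^0(\la-\beta)$ there to be exactly one by the Pieri rule. Both hinge on the same combinatorial fact (that $-\beta$ is not a sum of two or more distinct negative odd roots), but the trade-off is real: the paper's bound $\leq 1$ is slicker and needs no dominance assumption on $\la-\beta$, while your computation gives the multiplicity exactly and makes transparent why the hypothesis $\la-\beta\in P^+$ enters uniqueness at all --- the Pieri constituent $L^0(\la-\beta)$ exists precisely when $\la-\beta$ is dominant, which dovetails with the paper's remark that the hypothesis cannot be removed.

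One small step to tighten: \eqref{eq:SK} does not follow ``exactly as in Theorem~\ref{thm:Vsingular},'' because for Kac modules a singular vector of weight $\la-\beta$ in $K(\la)$ a priori only yields a map $M(\la-\beta)\to K(\la)$; one must add that this map factors through $K(\la-\beta)$, which holds since $K(\la)$ is finite dimensional and $K(\la-\beta)$ is the maximal finite-dimensional quotient of $M(\la-\beta)$ (using $\la-\beta\in P^+$ once more). The paper elides the same point in its ``In particular,'' so this is a shared omission rather than a defect of your route.
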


\begin{proof}
Recall $K(\la) =M(\la)/I_\la$ in \eqref{KMI}. We check that $S_{-\beta}\hwv$ (regarded as a vector in $M(\la)$) does not lie in the submodule $I_\la$.
To that end, let
\[
\mathcal{B}=\{J=\{j_1,j_2,\ldots, j_p\}\subset I~|~ 1\leq j_1<j_2<\cdots<j_p<t, \, 0\le p \le t-1\}\subset \JJ,
\]
where it is understood that $J=\emptyset$ if $p=0$. Moreover, below it is understood that $E_{t,j_p}E_{j_p-j_{p-1}}\cdots E_{j_1,\overline{s}}=E_{t,\overline{s}}$ if $p=0$.
By applying Lemma \ref{EijinJ}(1) repeatedly we have
\begin{align}
S_{-\beta}\hwv & \equiv \sum_{J\in\mathcal{B}}d_JE_{t,j_p}E_{j_p-j_{p-1}}\cdots E_{j_1,\overline{s}}\hwv
\notag \\
&\equiv \Big(\sum_{J\in\mathcal{B}}d_J \Big)E_{t,\overline{s}}\hwv
 \notag \\
&= \Big(\prod_{\overline{s}<i\leq \overline{1}}c_i\Big)
\Big(\prod_{1\leq j<t}(1+c_j)\Big) E_{t,\overline{s}}\hwv \quad (\mathrm{mod}\ J_\lambda).
 \label{equiv}
\end{align}
We know $E_{t,\overline{s}}\hwv\not\in J_\lambda$ by Lemma \ref{EijinJ}(2). Note that $\lambda-\beta\in P^+$ implies $c_i>0$ for $\overline{s}<i\leq \overline{1}$ and $1+c_{j}<0$ for $1\leq j<t$. It follows by \eqref{equiv} that $S_{-\beta}\hwv\not\in J_\lambda$.

Since the $\mathfrak{n}^-$-submodule generated by the singular vectors in \eqref{generatorsI} is equal to $I_\lambda$, we have
$I_\lambda\subseteq J_\lambda$ by the definition \eqref{eq:ideal J} of $J_\la$, and hence $S_{-\beta}\hwv\not\in I_\lambda$.
We conclude by Theorem~\ref{thm:Vsingular} that $S_{-\beta}\hwv$ descends to a singular vector in the Kac module $K(\lambda)$.

To complete the proof of the theorem, it remains to prove that
\begin{equation*}
\dim \mathrm{Hom}_\mathfrak{g} \big(K(\lambda-\beta),K(\lambda) \big) \leq 1,
\end{equation*}
or equivalently, by Frobenius reciprocity,
\begin{equation}  \label{eq:1}
\dim \mathrm{Hom}_{\mathfrak{g}_{\bar 0}} \big(L^0(\lambda-\beta), \Lambda(\mathfrak g_{-1}) \otimes L^0(\lambda) \big) \leq 1.
\end{equation}
The statement \eqref{eq:1} was stated by Serganova in \cite[Proof of Theorem 5.5]{Se96} who skipped the proof. Let us provide a short proof below (which is a simplified version of our original argument thanks to inputs from Shun-Jen Cheng). A singular vector for the $\mathfrak{g}_{\bar 0}$-module $\Lambda(\mathfrak g_{-1}) \otimes L^0(\lambda)$ must be of the form $x \otimes \hwv + \cdots \otimes U(\mathfrak n_{\bar 0}^-) \mathfrak n^-_{\bar 0} \hwv$ for some $0\neq x\in \Lambda(\mathfrak g_{-1})$, and so is of weight $\la +\mu$ for some weight $\mu$ for $\Lambda(\mathfrak g_{-1})$. Now observe that the $(-\beta)$-weight subspace $\Lambda(\mathfrak g_{-1})_{-\beta} =\Lambda^1(\mathfrak g_{-1})_{-\beta}$ is one-dimensional; hence \eqref{eq:1} follows.

The proof of the theorem is completed.
\end{proof}

\begin{rem}
The equality \eqref{eq:SK} was proved by Serganova in \cite[Theorem 5.5]{Se96} in a different approach, which played a fundamental role in the Kazhdan-Lusztig theory for the finite-dimensional module category of $\mathfrak{gl}(m|n)$. Our proof that $\mathrm{Hom}_\mathfrak{g} \big(K(\lambda-\beta),K(\lambda) \big) \neq 0$ is constructive and direct.
\end{rem}

\begin{rem}
The assumption $\la -\beta \in P^+$ in Theorem~\ref{thm:Ksingular} cannot be removed. Consider $\mathfrak g=\mathfrak{gl}(2|1)$ and $\beta=\delta_2-\varepsilon_1 \in \Phi_1^+$. A weight $\lambda= (a_2, a_1\mid b_1)$ with $b_1= -a_2 -1$ is $\beta$-atypical. The formula \eqref{sing} reads $S_{-\beta}=E_{1,\overline{1}}E_{\overline{1},\overline{2}}+c_{\overline{1}}E_{1,\overline{2}}.$ If $a_1=a_2$, then by \eqref{cvalue1} $c_{\overline{1}}=0$ and hence $S_{-\beta} \hwv=0$ in $K(\la)$.
\end{rem}

\subsection{Other basic types}

Verma $\mathfrak g$-modules $M(\la)$ can be defined as usual with respect to a triangular decomposition $\mathfrak g= \mathfrak n^- \oplus \mathfrak h \oplus \mathfrak n^+$ for any basic Lie superalgebra $\mathfrak g$ (cf. \cite[Chapter~ 1]{CW12} for basic Lie superalgebras with standard choices of positive root systems). 

\begin{conj}\label{conjecture}
Let $\mathfrak g$ be a basic Lie superalgebra.  Assume a weight $\la$ is $\beta$-atypical for an isotropic positive odd root $\beta$. Then \begin{equation*}
\mathrm{Hom}_\mathfrak{g} \big(M(\lambda-\beta),M(\lambda) \big)=\mathbb{C}.
\end{equation*}
\end{conj}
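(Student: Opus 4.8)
The plan is to follow the two-part structure of Theorem~\ref{thm:Vsingular} — exhibit an explicit singular vector to obtain $\mathrm{Hom}\neq 0$, then show the space of singular vectors of weight $\lambda-\beta$ is at most one-dimensional — while organizing both halves around reductions available for every basic type. The first reduction is to a sub-superalgebra: the roots occurring when $-\beta$ is expanded as a sum of negative roots all lie in a sub-root-system, and the corresponding root vectors generate a basic regular sub-superalgebra $\mathfrak l\subseteq\mathfrak g$ having $\beta$ among its odd roots; any $\beta$-atypical $\lambda$ restricts to a $\beta$-atypical weight for $\mathfrak l$, and the candidate singular vector lives in $\mathrm{U}(\mathfrak n^-_{\mathfrak l})_{-\beta}$. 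It therefore suffices to treat each basic $\mathfrak g$ with $\beta$ ranging over representatives of the odd-reflection orbits of isotropic odd roots. For $\mathfrak{gl}$, and for the odd roots $\delta_s-\epsilon_t$ of $\mathfrak{osp}$, this is exactly the computation already done, so the genuine targets are the roots $\delta_s+\epsilon_t$ of $\mathfrak{osp}$ together with $D(2,1;\zeta)$ (settled in \cite{CW17}), $G(3)$ and $F(4)$.

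For existence I would first dispose of the case in which $\beta$ is simple. Any isotropic odd root becomes simple after a sequence of odd reflections, so choose a Borel $\mathfrak b'$ in which $\beta$ is a simple isotropic odd root. Then $f_\beta v^+$ is singular: $e_\alpha f_\beta v^+=0$ for the other simple $\alpha$, while $e_\beta f_\beta v^+ = h_\beta v^+ = 0$ because $\langle\lambda',h_\beta\rangle=(\lambda',\beta)=0$, using $(\rho',\beta)=0$ for the isotropic simple root $\beta$. The substantive step is to transport this back to the standard Borel, i.e. to build the analogue of $S_{-\beta}$ by successively applying even (Shapovalov-type) reflections; the even reflection elements of \cite{Sa17} and the odd non-commutative-determinant elements of \cite{Mu17} are the natural raw material, and one expects a closed product formula in the spirit of \eqref{sing} and Corollary~\ref{cor:Selement}. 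Alternatively one may invoke a Serganova-style central-character-and-linkage argument to get $\mathrm{Hom}\neq0$ uniformly, at the cost of losing the explicit formula.

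For uniqueness I would pursue two complementary routes. The direct route generalizes the downward induction in Theorem~\ref{thm:Vsingular}: fix a PBW basis of $\mathrm{U}(\mathfrak n^-)_{-\beta}$, single out a leading monomial, and use the conditions $e_i\cdot S'_{-\beta}v^+=0$ for the simple raising operators to produce a triangular linear system whose solution, once the leading coefficient is fixed, is unique. The cleaner, type-uniform route is via the contravariant (Shapovalov) form: every singular vector of weight $\lambda-\beta$ generates a proper submodule, hence lies in the radical of the form on $M(\lambda)_{\lambda-\beta}$, and the super Kac--Kazhdan/Shapovalov determinant formula shows that the factor attached to an isotropic odd $\beta$ is \emph{linear} in $(\lambda+\rho,\beta)$. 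When $\lambda$ is exactly $\beta$-atypical and no other linkage occurs at this weight, the form degenerates to first order, so its corank — and hence the singular space — is $1$. Combined with existence this gives $\mathrm{Hom}=\mathbb C$.

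The \emph{main obstacle} I anticipate is the explicit-formula half for the non-$\mathfrak{gl}$-type isotropic roots, above all $\delta_s+\epsilon_t$ in $\mathfrak{osp}$. There the $(-\beta)$-weight space of $\mathrm{U}(\mathfrak n^-)$ is no longer spanned by single chains $E_{t,j_p}E_{j_p,j_{p-1}}\cdots E_{j_1,\overline{s}}$ carrying one odd factor, but also involves monomials built from the long even roots $2\epsilon_p$ and $\epsilon_p+\epsilon_q$ (and, a priori, monomials with three odd factors), so both the index set generalizing $\JJ$ and the recursion determining the coefficients $d_J$ lose the linear-chain triangularity that let the $\mathfrak{gl}$ induction terminate cleanly. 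Controlling this combinatorics — either by finding the correct index set and product weights, or by verifying first-order degeneracy of the Shapovalov form in these cases — is where the difficulty concentrates; the exceptional algebras $G(3)$ and $F(4)$ can then be finished by a finite, if tedious, direct computation.
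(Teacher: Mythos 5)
You are reviewing a proof attempt for Conjecture~\ref{conjecture}, but the paper contains no proof of this statement: it is deliberately left open, verified only for $\mathfrak{gl}(m|n)$ (Theorem~\ref{thm:Vsingular}), for $D(2|1;\zeta)$ by citing \cite{CW17}, and for the roots $\beta=\delta_s-\epsilon_t$ of $\mathfrak{osp}$ (the Proposition), with the paper explicitly flagging the roots $\delta_s+\epsilon_t$ of $\mathfrak{osp}$ and the remaining basic types as open. Your text is likewise not a proof but a plan, and by your own admission its unresolved steps (the combinatorics of $\mathrm{U}(\mathfrak{n}^-)_{-\beta}$ for $\beta=\delta_s+\epsilon_t$, plus $G(3)$, $F(4)$) coincide exactly with the open content of the conjecture. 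So the gap is not a technicality: what you have not done is precisely what remains to be done.

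Beyond incompleteness, two of your steps would fail as stated. First, the existence argument via odd reflections conflates Verma modules attached to different Borel subalgebras: if $\mathfrak{b}'$ is a Borel in which $\beta$ is simple, then $f_\beta$ applied to the highest weight vector produces a singular vector in the $\mathfrak{b}'$-Verma module, not in $M_{\mathfrak{b}}(\lambda)$; and it is exactly in the atypical case $(\lambda+\rho,\beta)=0$ that these two modules are \emph{not} isomorphic, so the ``transport back to the standard Borel'' is not a step of the solution but the entire problem. (Existence is in any case not the issue: as the paper notes right after the conjecture, Musson \cite{Mu17} proved the Hom space is nonzero for all basic types; the open part is uniqueness.) Second, the Shapovalov-corank route to uniqueness proves strictly less than the conjecture, because for non-generic atypical $\lambda$ the radical of the contravariant form on $M(\lambda)_{\lambda-\beta}$ is larger than the span of the singular vectors. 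Concretely, take $\mathfrak{g}=\mathfrak{gl}(3|1)$, $\beta=\delta_3-\epsilon_1$, and $\lambda=(a_3,a_2,a_1\mid b_1)$ with $a_1=a_2+1=a_3+2$ and $b_1=-a_1$, so that $\lambda$ is atypical with respect to all three odd roots $\delta_i-\epsilon_1$. Then $E_{1,\overline{1}}\hwv$ is singular, hence the radical contains $E_{\overline{1},\overline{3}}E_{1,\overline{1}}\hwv=\big(E_{1,\overline{1}}E_{\overline{1},\overline{3}}-E_{1,\overline{3}}\big)\hwv$ as well as $S_{-\beta}\hwv$; these are linearly independent (the first has no leading term $E_{1,\overline{1}}E_{\overline{1},\overline{2}}E_{\overline{2},\overline{3}}\hwv$, the second has leading coefficient $1$), so the corank is at least $2$, while the space of singular vectors of weight $\lambda-\beta$ is still one-dimensional by Theorem~\ref{thm:Vsingular}. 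Your hedge ``no other linkage occurs at this weight'' excludes precisely such weights, which the conjecture (and the paper's $\mathfrak{gl}$ theorem, whose triangular recursion handles them uniformly) includes; so even granting your existence step, your uniqueness argument yields only a generic version of the statement.
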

We note that via the study of Shapovalov elements Musson \cite{Mu17} has already established the Hom space in \eqref{eq:1d} is non-vanishing. The conjecture holds for $\mathfrak g= \mathfrak{gl}(m|n)$ by Theorem~\ref{thm:Vsingular}. It also holds for the exceptional Lie superalgebra $D(2|1; \zeta)$ by \cite[Lemma~ 2.3]{CW17}, where formulas for the singular vectors are given.

Below we focus on the ortho-symplectic Lie superalgebras $\mathfrak{osp}$.
A standard set of simple roots for $\mathfrak{g}=\mathfrak{osp}(2m|2n)$ is
\[
\Pi=\{\delta_{i+1}-\delta_{i},\delta_{1}-\epsilon_1,\epsilon_j-\epsilon_{j+1}, \epsilon_{m-1}+\epsilon_m~|~1\leq i<n, 1\leq j<m\},
\]
and the sets of positive roots and roots are
\begin{align*}
\Phi^+ &=\{\delta_i\pm\delta_j,\delta_i\pm\epsilon_k,\epsilon_k\pm\epsilon_l,2\delta_i~|~i>j,k<l\}
\\
\Phi &=\Phi^+\cup(-\Phi^+).
\end{align*}
We have $\rho= (n-m,\ldots,2-m,1-m~|~m-1,m-2,\ldots,0).$
Moreover, the set of positive odd roots is
$$\Phi^+_1=\{\delta_i\pm\epsilon_k~|~1\leq i\leq n, 1\leq k\leq m\}.$$
For $\beta=\delta_s\pm\epsilon_t \in \Phi_1^+$, a weight $\lambda=\sum_{i=1}^n a_i\delta_i+\sum_{k=1}^m b_k\epsilon_k=(a_n,\ldots,a_1~|~b_1,\ldots,b_m)$
is $\beta$-atypical (that is, $(\lambda+\rho,\beta)=0$) if and only if $a_s-m+s=\pm(b_t+m-t)$.

Denote by $e_\alpha$  $(\alpha\in\Phi)$ the Chevalley generators of $\mathfrak{g}$. Regarding $\mathfrak{g}$ as a subalgebra of $\mathfrak{gl}(2m|2n)$, the generators can be chosen explicitly as follows (cf. \cite{CW12}):
\begin{align*}
e_{2\delta_i}=E_{\overline{i},\overline{i+n}}, &\quad\quad e_{-2\delta_i}=E_{\overline{i+n},\overline{i}},\\
e_{\delta_i+\delta_j}=E_{\overline{i},\overline{j+n}}+E_{\overline{j},\overline{i+n}}, &\quad\quad e_{-\delta_i-\delta_j}=E_{\overline{j+n},\overline{i}}+E_{\overline{i+n},\overline{j}},  \quad (i\neq j) \\
e_{\delta_i-\delta_j}=E_{\overline{i},\overline{j}}+E_{\overline{j+n},\overline{i+n}}, &\quad\quad e_{\epsilon_k-\epsilon_l}=E_{kl}-E_{l+m,k+m},\\
e_{\epsilon_k+\epsilon_l}=E_{k,l+m}-E_{l,k+m}, &\quad\quad e_{-\epsilon_k-\epsilon_l}=E_{l+m,k}-E_{k+m,l}, \quad(k<l)\\
e_{\delta_i+\epsilon_k}=E_{k,\overline{i+n}}+E_{\overline{i},k+m}, &\quad\quad e_{-\delta_i-\epsilon_k}=E_{k+m,\overline{i}}-E_{\overline{i+n},k},\\
e_{\delta_i-\epsilon_k}=E_{k+m,\overline{i+n}}+E_{\overline{i},k}, &\quad\quad e_{-\delta_i+\epsilon_k}=E_{k,\overline{i}}-E_{\overline{i+n},k+m}.
\end{align*}
(The generators for $\mathfrak{osp}(2m+1|2n)$ can be similarly chosen; cf. \cite{CW12}.)

Recall the set $\mathcal{A}$ in \eqref{eq:JJ}. For any $J=\{j_1,j_2,\ldots, j_p\}\in\mathcal{A}$, we set
$$e_J=e_{\epsilon_t-\epsilon_{j_p}}e_{\epsilon_{j_p}-\epsilon_{j_{p-1}}}\cdots e_{\epsilon_{j_1}-\epsilon_{\overline{s}}},$$
where we use the convention $\epsilon_{\overline{i}}:=\delta_i$. It is understood $e_\emptyset =e_{\epsilon_t-\delta_{s}}$.
Recall the scalar $d_J$ in \eqref{eq:dj}. Now we set
$S_{-\beta}=\sum_{J\in\JJ}d_J e_J.$

\begin{proposition}
Let $\mathfrak{g}=\mathfrak{osp}(2m|2n)$ or $\mathfrak{osp}(2m+1|2n)$. Let $\beta=\delta_s-\epsilon_t$. If a weight $\lambda=(a_n,\ldots,a_1~|~b_1,\ldots,b_m)$
is $\beta$-atypical, then the element $S_{-\beta}\hwv$ is the unique (up to a nonzero scalar multiple) singular vector in $M(\lambda)$ of weight $\lambda-\beta$.
\end{proposition}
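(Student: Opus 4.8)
The plan is to reduce the statement to Theorem~\ref{thm:Vsingular} by exhibiting a type-$A$ Levi subalgebra. Deleting the tail node $\epsilon_{m-1}+\epsilon_m$ (resp. $\epsilon_m$ in the $\mathfrak{osp}(2m+1|2n)$ case) from $\Pi$ leaves the chain $\{\delta_{i+1}-\delta_i,\ \delta_1-\epsilon_1,\ \epsilon_j-\epsilon_{j+1}\}$, whose associated subalgebra $\mathfrak l$ (together with $\mathfrak h$) is isomorphic to $\mathfrak{gl}(n|m)$; under the convention $\epsilon_{\overline i}:=\delta_i$, the isomorphism carries each $e_{\epsilon_c-\epsilon_d}$ to the matrix unit $E_{c,d}$, hence $e_J$ to $E_J$ and $S_{-\beta}=\sum_J d_J e_J$ to the $\mathfrak{gl}(n|m)$-singular vector of Theorem~\ref{thm:Vsingular}. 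First I would record that the two atypicality conditions agree: a direct computation with $\rho=(n-m,\ldots,1-m\mid m-1,\ldots,0)$ shows $(\lambda+\rho,\beta)=0$ is equivalent to $a_s+b_t+s-t=0$, which is exactly condition \eqref{asbt} for $\mathfrak{gl}(n|m)$. Since the scalars $c_k$ of \eqref{cvalue1}--\eqref{cvalue2} and the coefficients $d_J$ of \eqref{eq:dj} make no reference to $m$ or $n$, the same formula applies verbatim.

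For singularity I must check $e_\alpha\cdot S_{-\beta}\hwv=0$ for every simple root $\alpha$, and I would split these into the $m+n-1$ simple roots lying in $\mathfrak l$ and the single tail root $\gamma$. As $\hwv$ is a highest weight vector for $\mathfrak g$, it is also one for $\mathfrak l$, and $U(\mathfrak l^-)\hwv$ (where $\mathfrak l^-=\mathfrak l\cap\mathfrak n^-$) is a genuine $\mathfrak{gl}(n|m)$-Verma module $M^{\mathfrak l}(\lambda)$, namely a free rank-one $U(\mathfrak l^-)$-module by PBW with the $\mathfrak l^-$-generators ordered last. Since $S_{-\beta}\hwv$ lies in this submodule and corresponds to the $\mathfrak{gl}(n|m)$-singular vector, Theorem~\ref{thm:Vsingular} yields $e_\alpha\cdot S_{-\beta}\hwv=0$ for all $\alpha\in\mathfrak l$ simultaneously.

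The remaining, genuinely new, point is the tail root $\gamma$, and this is where I expect the only real work. The strategy is to show $e_\gamma$ commutes with $S_{-\beta}$. Each factor of $e_J$ is a root vector for some $\epsilon_c-\epsilon_d$ with the involved $\epsilon$-indices bounded by $t\le m$; one checks case by case that $\gamma+(\epsilon_c-\epsilon_d)$ is never a root of $\mathfrak{osp}$, since adding $\gamma$ (supported on $\epsilon_{m-1},\epsilon_m$, resp. $\epsilon_m$) to such a difference always produces a weight with three or more nonzero $\epsilon$-coordinates, or a coefficient $\pm2$ on some $\epsilon_i$, neither of which occurs among roots. Hence $[e_\gamma,e_J]=0$ for every $J$, and since $e_\gamma\hwv=0$ we get $e_\gamma\cdot S_{-\beta}\hwv=0$. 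Together with the previous paragraph this shows $S_{-\beta}\hwv$ is singular of weight $\lambda-\beta$.

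Finally, for uniqueness I would use the linear functional $N(\sum_i x_i\delta_i+\sum_k y_k\epsilon_k)=\sum_i x_i+\sum_k y_k$, which grades $\mathfrak g$ into pieces of degree $0$ (the Levi roots) and $\pm2$ (the nilradical roots $\delta_i+\delta_j,\delta_i+\epsilon_k,\epsilon_k+\epsilon_l,2\delta_i$ and their negatives). Writing $U(\mathfrak n^-)=U(\mathfrak u^-)U(\mathfrak l^-)$ with $\mathfrak u^-=\mathfrak g^{(-2)}$, every nontrivial monomial in $U(\mathfrak u^-)$ strictly lowers $N$, while $N(-\beta)=0$; hence any weight-$(-\beta)$ vector of $M(\lambda)$ already lies in $U(\mathfrak l^-)\hwv\cong M^{\mathfrak l}(\lambda)$. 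A singular vector of weight $\lambda-\beta$ is in particular $\mathfrak l$-singular, so by the uniqueness half of Theorem~\ref{thm:Vsingular} it is a scalar multiple of $S_{-\beta}\hwv$, completing both existence and uniqueness.
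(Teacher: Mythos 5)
Your proof is correct, but it takes a genuinely different route from the paper's: the paper disposes of the proposition in one line, by re-running the proof of Theorem~\ref{thm:Vsingular} with the $\mathfrak{osp}$ Chevalley generators in place of matrix units (the same commutator check for each simple root, and the same downward induction for uniqueness), whereas you reduce to Theorem~\ref{thm:Vsingular} itself. Your reduction is structurally more informative: all factors of each $e_J$ are root vectors of the Levi subalgebra $\mathfrak l\cong\mathfrak{gl}(n|m)$ obtained by deleting the tail node, so $S_{-\beta}\in U(\mathfrak l\cap\mathfrak n^-)$ and $S_{-\beta}\hwv$ lies in the $\mathfrak l$-Verma module $U(\mathfrak l\cap\mathfrak n^-)\hwv$, where the theorem kills all Levi raising operators; the tail generator $e_\gamma$ kills it because $\gamma+\alpha$ is never a root nor zero for the roots $\alpha$ occurring in $e_J$ (your ``coefficient $\pm2$'' case correctly covers $\gamma+(\epsilon_m-\epsilon_{m-1})=2\epsilon_m$, not a root of $\mathfrak{osp}$ since only $2\delta_i$ occur); and your $N$-grading gives $U(\mathfrak n^-)_{-\beta}=U(\mathfrak l\cap\mathfrak n^-)_{-\beta}$, so uniqueness also descends from the theorem. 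This explains \emph{why} the $\mathfrak{gl}$ formula transports verbatim and why it has no easy extension to $\beta=\delta_s+\epsilon_t$ (such a root leaves every type-$A$ Levi), which the paper's ``same proof'' phrasing does not illuminate. The one point you gloss---and it is exactly what the paper's repetition would verify---is that the $\mathfrak{osp}$ root vectors obey the matrix-unit relations $[e_{\epsilon_c-\epsilon_d},e_{\epsilon_d-\epsilon_f}]=e_{\epsilon_c-\epsilon_f}$: an isomorphism $\mathfrak l\cong\mathfrak{gl}(n|m)$ respecting root spaces a priori only gives $E_{c,d}\mapsto\chi_{c,d}\,e_{\epsilon_c-\epsilon_d}$ for scalars $\chi_{c,d}$, and $\sum_J d_J\chi_J e_J\hwv$ is proportional to $\sum_J d_J e_J\hwv$ only if $\chi_J=\prod_{\text{factors}}\chi_{c,d}$ is independent of $J$, so this identity does need checking. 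It holds for the standard generators (e.g.\ $[e_{\epsilon_k-\epsilon_l},e_{\epsilon_l-\epsilon_p}]=e_{\epsilon_k-\epsilon_p}$ and $[e_{\epsilon_k-\epsilon_l},e_{\epsilon_l-\delta_i}]=e_{\epsilon_k-\delta_i}$ by direct computation), although the paper's printed $e_{\delta_i-\delta_j}=E_{\overline i,\overline j}+E_{\overline{j+n},\overline{i+n}}$ must be a sign typo for $E_{\overline i,\overline j}-E_{\overline{j+n},\overline{i+n}}$, since otherwise brackets of root vectors would contradict the one-dimensionality of root spaces; with the corrected sign the relations hold, and your (and the paper's) argument goes through. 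Two further harmless slips: for $\mathfrak{osp}(2m+1|2n)$ the $N$-grading also has pieces in degrees $\pm1$ (the roots $\pm\delta_i$, $\pm\epsilon_k$), so $\mathfrak u^-=\mathfrak g^{(-1)}\oplus\mathfrak g^{(-2)}$ rather than $\mathfrak g^{(-2)}$ alone; your argument survives because it only needs every non-Levi negative root to have strictly negative $N$-degree, which remains true.
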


\begin{proof}
Follows by the same proof as for Theorem \ref{thm:Vsingular}.
\end{proof}

In particular, Conjecture \ref{conjecture} holds for odd roots of the form $\beta=\delta_s-\epsilon_t$ of the $\mathfrak{osp}$ Lie superalgebras. This singular vector formula does not easily generalize to the setting of an odd root of $\mathfrak{osp}$ of the form $\delta_s +\epsilon_t$.
We end with an example of $\mathfrak{osp}$ of low rank supporting the conjecture in this case. 

\begin{example}
Let $\mathfrak{g}=\mathfrak{osp}(6|2)$, $\rho=(-2~|~2,1,0)$ and $\beta=\delta_1+\epsilon_1$. Assume $\lambda=(a~|~b_1,b_2,b_3)$ with $a=b_1+4$. Then $(\lambda+\rho,\beta)=0$. Define
\allowdisplaybreaks
\begin{align*}
S_{-\beta}=&e_{\epsilon_3-\epsilon_2}e_{-\epsilon_3-\epsilon_2}e_{\epsilon_2-\epsilon_1}^2e_{\epsilon_1-\delta}
-(b_2+b_3+2)e_{\epsilon_3-\epsilon_2}e_{-\epsilon_3-\epsilon_1}e_{\epsilon_2-\epsilon_1}e_{\epsilon_1-\delta} \\
&-(b_2-b_3+2)e_{-\epsilon_3-\epsilon_2}e_{\epsilon_3-\epsilon_2}e_{\epsilon_2-\epsilon_1}e_{\epsilon_1-\delta}
-(2b_1+4)e_{\epsilon_3-\epsilon_2}e_{-\epsilon_3-\epsilon_2}e_{\epsilon_2-\epsilon_1}e_{\epsilon_2-\delta}\\
&-[(b_1-b_2)(b_1+b_2+3)+(b_2-b_3+2)(b_2+b_3+2)]e_{-\epsilon_2-\epsilon_1}e_{\epsilon_2-\epsilon_1}e_{\epsilon_1-\delta}\\
&-(b_1-b_2)(b_1+b_2+3)e_{-\epsilon_3-\epsilon_1}e_{-\epsilon_3-\epsilon_1}e_{\epsilon_1-\delta}
+(b_1+b_2+3)(b_1+b_3+2)e_{\epsilon_3-\epsilon_2}e_{\epsilon_2-\epsilon_1}e_{-\epsilon_3-\delta}\\
&+(b_1+b_2+3)(b_1-b_3+2)e_{-\epsilon_3-\epsilon_2}e_{\epsilon_2-\epsilon_1}e_{\epsilon_3-\delta}\\
&+[(b_1-b_2+1)(b_2+b_3+2)-(b_1-b_2)(b_1+b_2+3)]e_{\epsilon_3-\epsilon_2}e_{-\epsilon_3-\epsilon_1}e_{\epsilon_2-\delta}\\
&+[(b_1-b_2+1)(b_2-b_3+2)-(b_1-b_2)(b_1+b_2+3)]e_{-\epsilon_3-\epsilon_2}e_{\epsilon_3-\epsilon_1}e_{\epsilon_2-\delta}\\
&+[(b_1-b_2)^2(b_1+b_2+3)+(b_1-b_2+1)(b_2-b_3+2)(b_2+b_3+2)]e_{-\epsilon_2-\epsilon_1}e_{\epsilon_2-\delta}\\
&+(b_1+b_2+3)(b_1-b_3+2)(b_1+b_3+2)e_{\epsilon_2-\epsilon_1}e_{-\epsilon_2-\delta}\\
&+(b_1-b_2)(b_1+b_2+3)(b_1+b_3+2)e_{\epsilon_3-\epsilon_1}e_{-\epsilon_3-\delta}\\
&+(b_1-b_2)(b_1+b_2+3)(b_1-b_3+2)e_{-\epsilon_3-\epsilon_1}e_{\epsilon_3-\delta}\\
&+(b_1-b_2+1)(b_1+b_2+3)(b_1-b_3+2)(b_1+b_3+2)e_{-\epsilon_1-\delta}.
\end{align*}
Then by a direct computation $S_{-\beta}\hwv$ is the unique (up to a scalar multiple) singular vector in the Verma $\mathfrak{g}$-module $M(\lambda)$ of weight $\lambda-\beta$. It remains a challenging and interesting problem to find a closed formula for the  singular vectors with respect to odd roots $\beta=\delta_s+\epsilon_t$ for general $\mathfrak{osp}$ Lie superalgebras.
\end{example}


\end{document}